\definecolor{backgroundcolor}{rgb}{0.9,0.8,0.8}
\numberwithin{equation}{section}
\newcommand{\vanish}[1]{}
\def\bbar#1{\setbox0=\hbox{$#1$}\dimen0=.2\ht0 \kern\dimen0 }
\newcommand{\defi}[1]{\textsf{#1}} 
\newenvironment{romanenum}{\hfill \begin{enumerate}[label=({\roman*})]
}
{\end{enumerate}}
\DeclareSymbolFont{cyrletters}{OT2}{wncyr}{m}{n}
\DeclareMathSymbol{\Sha}{\mathalpha}{cyrletters}{"58}
\newcommand{\PP}{{\mathbb P}}
\newcommand{\Q}{{\mathbb Q}}
\newcommand{\ZZ}{{\mathbb Z}}
\newcommand{\Zhat}{{\hat{\ZZ}}}
\def\bbar#1{\setbox0=\hbox{$#1$}\dimen0=.2\ht0 \kern\dimen0 \overline{\kern-\dimen0 #1}}
\newcommand{\Qab}{{\mathbb Q}^{\operatorname{ab}}}
\newcommand{\frakA}{{\mathfrak A}}
\newcommand{\frakB}{{\mathfrak B}}
\newcommand{\calF}{{\mathcal F}}
\newcommand{\calH}{{\mathcal H}}
\DeclareMathOperator{\Aut}{Aut}
\DeclareMathOperator{\Gal}{Gal}
\newcommand{\GL}{\operatorname{GL}}
\newcommand{\SL}{\operatorname{SL}}
\newcommand{\PGL}{\operatorname{PGL}}
\newcommand{\injects}{\hookrightarrow}
\newcommand{\intersect}{\cap} 
\newcommand{\tensor}{\otimes}
\def\CC{\mathbb C}
\definecolor{webcolor}{rgb}{0,0,1}
\definecolor{webbrown}{rgb}{.6,0,0}
\newtheorem{theorem}{Theorem}[section]
\newtheorem{lemma}[theorem]{Lemma}
\newtheorem{proposition}[theorem]{Proposition}
\theoremstyle{definition}
\newtheorem{definition}[theorem]{Definition}
\newtheorem*{Mazur's Program B}{Mazur's Program B}
\theoremstyle{remark}
\newtheorem*{remark}{Remark}
\crefname{section}{§}{§§}
\crefname{lemma}{Lemma}{Lemmas}
\crefname{equation}{equation}{equations}
\crefname{theorem}{Theorem}{Theorems}
\crefname{proposition}{Proposition}{Propositions}
\date{\today}
\begin{document}

\title[]{Genus $0$ Modular curves of prime power level with a point defined over number fields other than $\Q$}

\author{Rakvi}

\begin{abstract}

Associated to an open subgroup $G$ of $\GL_2(\Zhat)$ satisfying conditions $-I \in G$ and $\det(G) \subsetneq (\Zhat)^{\times}$ there is a modular curve $X_G$ which is a smooth compact curve defined over an extension of $\Q.$ In this article, we give a complete list of all such prime power level genus $0$ modular curves with a point.

\end{abstract}

\maketitle
\setcounter{tocdepth}{1}

\section{Notations} 

We denote the $N$-th cyclotomic field by $K_N$. We denote the number of divisors of a positive integer $n$ by $\omega(n).$ For a field $K$, we denote the multiplicative group of non-zero elements of $K$ by $K^{\times}.$ We denote the profinite completion of ring of integers $\ZZ$ by $\Zhat$ and the multiplicative group of its units by $\Zhat^{\times}.$

\section{Introduction}

Let $G$ be an open subgroup of $\GL_2(\Zhat)$ satisfying $-I \in G$ and $\det(G) \subsetneq \Zhat^{\times}.$ Associated to such a subgroup we have a modular curve $X_G$ which is a smooth, projective curve defined over a number field $K \ne \Q$ along with a morphism to $\PP^1_K.$ In \cref{sec:Modular Curves}, we discuss this in detail. 

We say that $G$ has level $M$ if it is the smallest positive integer such that $G$ is equal to inverse image of $\pi_M(G)$, where $\pi_M \colon \GL_2(\Zhat) \to \GL_2(\ZZ/M\ZZ)$ is the reduction modulo $M.$ In this article we give a list of all such genus $0$ modular curves $(X_G,\pi_G)$ of prime power level with a point such that $\det(G) \subsetneq \Zhat^{\times}$ and $-I \in G.$ We performed all the computations on \texttt{Magma.} For a complete list of genus $0$ and genus $1$ modular curves $(X_G,\pi_G)$ defined over $\Q$ that have infinitely many rational points, see \cite{MR3671434}.

\begin{theorem}\label{Mainthm}
Let $G$ be an open subgroup of $\GL_2(\Zhat)$ satisfying the following conditions:
\begin{itemize}
    \item It contains $-I$ and $\det(G) \subsetneq \Zhat^{\times}.$
    
    \item The level of $G$ is a power of some prime number $p.$
    
    \item The genus of associated modular curve $(X_G,\pi_G)$ is $0$ and has a point.
\end{itemize}
Then, $(X_G,\pi_G)$ is defined over a subfield $K \neq \Q$ of $K_m$ where $m \in \{3,5,7,9,8,11,13,16,25,27,32\}.$ We list all such curves that have odd prime power level \href{https://github.com/Rakvi6893/Genus-0-modular-curves-over-higher-number-fields/blob/main/Tables%20for%20odd%20prime%20power%20level_genus%200%20over%20extensions%20of%20Q.txt}{in text file 1}

and curves whose level is a power of $2$ \href{https://github.com/Rakvi6893/Genus-0-modular-curves-over-higher-number-fields/blob/main/Table_powers%20of%202.txt}{in text file 2}.

\end{theorem}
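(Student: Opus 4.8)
The plan is to reduce the classification to a finite group-theoretic enumeration and then to decide, case by case, whether each resulting genus $0$ curve has a point. First I would make the field of definition explicit. As in the setup of \cref{sec:Modular Curves}, for $G$ with $-I \in G$ the curve $X_G$ is defined over the fixed field $K = (\Qab)^{\det(G)}$, where $\det(G) \subseteq \Zhat^\times$ is identified with a subgroup of $\Gal(\Qab/\Q)$ via the cyclotomic character. Since $G$ has level $p^k$, the subgroup $\det(G)$ is the full preimage of $\det(\pi_{p^k}(G)) \subseteq (\ZZ/p^k\ZZ)^\times$, so $K$ is the subfield of $K_{p^k}$ cut out by $\det(\pi_{p^k}(G))$; in particular $K \subseteq K_{p^k}$, and $K \ne \Q$ is exactly the hypothesis $\det(G) \subsetneq \Zhat^\times$. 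Thus everything is controlled by the finite group $H := \pi_{p^k}(G) \subseteq \GL_2(\ZZ/p^k\ZZ)$, and it remains to bound $p^k$ and to enumerate the possibilities for $H$.

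Second, the genus of $X_G$ depends only on the geometric structure, i.e. on the image of $H \cap \SL_2(\ZZ/p^k\ZZ)$ in $\PSL_2$, through the standard formula $g = 1 + d/12 - e_2/4 - e_3/3 - e_\infty/2$, with $d$ the index, $e_2, e_3$ the numbers of elliptic points, and $e_\infty$ the number of cusps, all computable from $H$. The genus $0$ congruence subgroups of $\SL_2(\ZZ)$ have bounded level (by the classification of Cummins and Pauli), so only finitely many prime powers $p^k$ support a genus $0$ curve; carrying out the index/elliptic-point/cusp bookkeeping shows the surviving levels are exactly those with $p^k \in \{3,5,7,9,8,11,13,16,25,27,32\}$. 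For each such $p^k$ I would enumerate, up to conjugacy in $\GL_2(\ZZ/p^k\ZZ)$, the subgroups $H$ with $-I \in H$, genus $0$, and $\det(H) \subsetneq (\ZZ/p^k\ZZ)^\times$; this is the same search used for the $\det(G) = \Zhat^\times$ case in \cite{MR3671434}, now retaining precisely the groups with proper determinant. All of this is a finite computation in \texttt{Magma}.

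Finally --- and this is the real obstacle --- for each surviving $G$ one must decide whether $X_G$ has a point, i.e. whether the genus $0$ curve $X_G/K$ is isomorphic to $\PP^1_K$. Since a genus $0$ curve over $K$ acquires a rational point as soon as it has a closed point of odd degree (equivalently, iff its class in $\operatorname{Br}(K)[2]$ is trivial, which by Hasse--Minkowski can be tested locally at the finitely many ramified places), I would compute the Galois action on the cusps and CM points of $X_G$ directly from the group data: a cusp or CM point of odd degree over $K$ immediately produces a rational point and hence $X_G \isom \PP^1_K$. The delicate cases are those admitting no such point, where I would either exhibit an explicit rational parametrization of the associated conic or verify local insolubility to conclude that $X_G$ has no point and must be discarded. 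Assembling the curves that survive, reading off their fields of definition $K \subseteq K_m$, and recording a model together with $\pi_G$ produces the tabulated list.
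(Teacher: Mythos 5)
Your overall strategy --- enumerate the finite images $H=\pi_{p^k}(G)$ directly, up to conjugacy, and then test each resulting genus $0$ curve for a point --- is genuinely different from the paper's, which fixes the congruence subgroup $\Gamma=G\intersect\SL_2(\ZZ)$ and classifies the pairs $(X_G,\pi_G)$ as twists of $(X_\Gamma,\pi_\Gamma)$ via cocycles valued in $\Aut_{K_{p^m}}(X_\Gamma,\pi_\Gamma)\subseteq\PGL_2$. That difference would be acceptable, but your version has a genuine gap at the step where you claim finiteness. The Cummins--Pauli classification bounds the level of the congruence subgroup $\Gamma=G\intersect\SL_2(\ZZ)$, because the genus of $X_G$ depends only on $\Gamma$; it does \emph{not} bound the level $p^k$ of $G$ as a subgroup of $\GL_2(\Zhat)$. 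Indeed, starting from any genus $0$ group of level $p^n$ one can impose the extra condition $\det(g)\equiv 1 \pmod{p^k}$ for arbitrarily large $k$: the intersection with $\SL_2$, and hence the genus, is unchanged, but the level of $G$ becomes $p^k$ and the field $K_{p^k}^{\det(G)}$ that you identify as the field of definition is large and is \emph{not} contained in $K_{p^n}$. So your assertion that ``only finitely many prime powers $p^k$ support a genus $0$ curve'' is false for the level of $G$, your enumeration of subgroups $H\subseteq\GL_2(\ZZ/p^k\ZZ)$ is infinite as $k$ grows, and without further argument you cannot conclude that $K$ lies inside $K_m$ for $m$ in the stated finite list (note that the $m$ there are the levels of the groups $\Gamma$, not of the groups $G$).

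What is missing is precisely the descent step that occupies the paper's section on fields of definition. One must show that although $X_G$ is a priori defined over $K=K_{p^m}^{\det(G)}$ with $p^m$ the (unbounded) level of $G$, the pair $(X_G,\pi_G)$ is determined by a cocycle with values in $\PGL_2(K_{p^n})$ satisfying $\sigma(\pi_\Gamma)=\pi_\Gamma\circ\zeta(\sigma)$, and that via the isomorphism $\Gal(K_{p^{m}}/K)\cong\Gal(K_{p^n}/K\intersect K_{p^n})$ this cocycle also defines a model over $K\intersect K_{p^n}$; together with \cref{vertical cocycles}, which forces every relevant cocycle to factor through the finite quotient $\Gal(K_{b_\Gamma p^n}/K)$, this is what reduces the problem to a finite computation and yields the containment $K\subseteq K_m$. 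Your final step --- deciding whether the genus $0$ curve has a point by exhibiting an odd-degree closed point or by applying the Hasse principle to the associated conic --- is consistent with what the paper does and is not the issue.
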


\section{Modular Curves and Modular Functions} \label{sec:Modular Curves}

\subsection{Congruence subgroups}

Let $\Gamma$ be a congruence subgroup of $\SL_{2}(\ZZ).$ \label{SL2level}The level of $\Gamma$ is the smallest positive integer $N$ for which $\Gamma$ contains \[\Gamma(N):=\{A \in \SL_{2}(\ZZ)~|~A \equiv I\pmod{N}\}.\] The group $\SL_2(\ZZ)$ acts on the complex upper half plane $\calH$ by linear fractional transformations. 

We define the extended upper half plane $\calH^{*}$ as $\calH \cup \Q \cup \{\infty\}.$ The action of $\SL_2(\ZZ)$ on $\calH$ naturally extends to the action on $\calH^{*}.$ A \defi{cusp} of $\Gamma$ is a $\Gamma$-orbit of $\Q \cup \{\infty\}.$ After adding cusps to the quotient $\Gamma \setminus \calH$, we get a smooth compact Riemann surface which we denote by $X(\Gamma).$ We will use the notation $X_N$ to denote $X(\Gamma(N))$, where $N$ is a positive integer.

\subsection{Modular functions}

Let $\Gamma$ be a congruence subgroup of $\SL_2(\ZZ).$ A \defi{modular function} $f$ for $\Gamma$ is a meromorphic function of $X(\Gamma)$ i.e., $f$ is a meromorphic function on the upper half plane $\calH$ that satisfies $f(\gamma \tau)=f(\tau)$ for all $\gamma \in \Gamma$ and is meromorphic at the cusps.   

The \defi{width} of the cusp at $\infty$ is the smallest positive integer $w$ such that $(\begin{smallmatrix}
1 & w \\ 
0  & 1
\end{smallmatrix}) \in \Gamma.$ Define $q:=e^{2\pi \dot{\iota} \tau}$ and $q^{1/w}:=e^{2\pi \dot{\iota} \tau/w}$, where $\tau \in \calH.$ If $f$ is a modular function for $\Gamma$, then $f$ has a unique $q$-expansion $f(\tau)=\sum \limits_{n \in \ZZ} c_{n}q^{n/w}$ with $c_{n} \in \CC$ and $c_n=0$ for all but finitely many $n$ less than $0.$

Fix a positive integer $N.$ Let $\calF_N$ denote the field of meromorphic functions of the Riemann surface $X_N$ whose \defi{$q$-expansion} has coefficients in $K_{N}$. For $N=1$, we have $\calF_1=\Q(j)$ where $j$ is the modular $j$-invariant. The first few terms of $q$-expansion of $j$ are $q^{-1}+744+196884q+21493760q^2+\dotsb.$
If $N'$ is a divisor of $N$, then $\calF_{N'} \subseteq \calF_N.$ In particular, we have that $\calF_1 \subseteq \calF_N.$

Take $g \in \GL_2(\ZZ/N\ZZ)$ and let $d=\det(g).$ Then, $g=(\begin{smallmatrix}
1 & 0 \\
0 & d \end{smallmatrix})g'$ for some $g' \in \SL_2(\ZZ/N\ZZ)$. There is a unique right action $*$ of $\GL_2(\ZZ/N\ZZ)$ on $\calF_N$ that satisfies the following properties (see Chapter $2$, section $2$ in \cite{MR648603} for details):

\begin{itemize}
\item If $g \in \SL_2(\ZZ/N\ZZ)$, then \[f*g:=f|_{\gamma},\] where $\gamma \in \SL_{2}(\ZZ)$ is congruent to $g$ modulo $N$ and $f|_{\gamma}(\tau)=f(\gamma \tau)$.

\item If $g=(\begin{smallmatrix}
1 & 0 \\
0 & d \end{smallmatrix})$, then 
\[f*g:=\sigma_{d}(f):=\sum\limits_{n \in \ZZ} \sigma_{d}(c_{n})q^{n/w},\] where $f=\sum\limits_{n \in \ZZ} c_{n}q^{n/w}$ and $\sigma_{d}$ is the automorphism of $K_N$ that satisfies $\sigma_{d}(\zeta_N)=\zeta_N^{d}.$
\end{itemize} 

\begin{proposition}The following properties hold.\label{action on F_N}
\hfill
\begin{romanenum}
    \item $-I$ acts trivially on $\calF_N$ and the action $*$ of $\GL_{2}(\ZZ/N\ZZ)/\{\pm I\}$ on $\mathcal{F}_N$  is faithful.
    
    \item We have $\calF_N^{\GL_2(\ZZ/N\ZZ)/\{\pm I\}}=\Q(j).$
    
    \item \label{opposite}The extension $\mathcal{F}_N$ over $\Q(j)$ is Galois and the action $*$ gives an isomorphism \[\GL_{2}(\ZZ/N \ZZ)/\{\pm I\} \to \Gal(\mathcal{F}_N/\Q(j))^{op},\] where $\Gal(\mathcal{F}_N/\Q(j))^{op}$ is the opposite group of $\Gal(\mathcal{F}_N/\Q(j))$ (i.e., the same underlying set with the group law reversed.)
    
    \item The algebraic closure of $\Q$ in $\calF_{N}$ is $K_{N}$. We have $\mathcal{F}_N^{\SL_{2}(\ZZ/N \ZZ)/\{\pm I\}}=K_N(j).$\label{algclosure}

\end{romanenum}

\end{proposition}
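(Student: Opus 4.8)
The plan is to base everything on the arithmetic of Fricke functions together with Artin's theorem, so that the Galois statement \cref{opposite} falls out once the fixed fields in \ref{algclosure} and (ii) and the faithfulness in (i) are known. For $a=(a_1,a_2)\in(\tfrac1N\ZZ)^2\setminus\ZZ^2$ let $f_a$ denote the associated weight-$0$ Fricke function, a normalization of $\tau\mapsto\wp(a_1\tau+a_2;\ZZ\tau+\ZZ)$. The inputs I take from the reference are: (a) $f_a\in\calF_N$, i.e.\ its $q$-expansion has coefficients in $K_N$; (b) $\calF_N=\Q(j,\{f_a\})$, while the field $\calM$ of all meromorphic functions on $X_N$ equals $\CC(j,\{f_a\})$; and (c) $f_a*g=f_{ag}$, where $\GL_2(\ZZ/N\ZZ)$ acts on the index $a$ on the right as a row vector, the factor $\mathrm{diag}(1,d)$ realizing $\sigma_d$ on $q$-coefficients. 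I also use the geometric fact that $X_N\to X_1$ is a Galois cover with deck group $\SL_2(\ZZ/N\ZZ)/\{\pm I\}$, so that $\calM^{\SL_2(\ZZ/N\ZZ)/\{\pm I\}}=\CC(j)$.

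For part (i), since $-I\in\SL_2(\ZZ)$ acts on $\calH$ as the identity transformation, $f|_{-I}=f$ and $-I$ acts trivially. For faithfulness, if $g$ fixes all of $\calF_N$ then $f_{ag}=f_a*g=f_a$ for every $a$; as $\wp$ is even and separates distinct pairs, $f_a=f_{a'}$ exactly when $a\equiv\pm a'\pmod{\ZZ^2}$, so $ag\equiv\pm a$ for all $a$, forcing $g\equiv\pm I$.

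For parts \ref{algclosure} and (ii), note first that by (c) the $\SL_2$-action permutes the generators $f_a\in\calF_N$ and fixes $j$ and the constants, so it preserves $\calF_N$ and $K_N(j)\subseteq\calF_N^{\SL_2/\{\pm I\}}$. Conversely an $\SL_2$-invariant $f\in\calF_N$ lies in $\calM^{\SL_2/\{\pm I\}}=\CC(j)$, hence is a ratio of polynomials in $j$; since $f$ has $q$-coefficients in $K_N$ and $j$ has rational ones, a linear-algebra descent puts $f\in K_N(j)$. Thus $\calF_N^{\SL_2/\{\pm I\}}=K_N(j)$, which is \ref{algclosure}; moreover $K_N\subseteq\calF_N$ as constants, while any constant in $\calF_N$ equals its own $q$-expansion and so lies in $K_N$, giving that $K_N$ is the algebraic closure of $\Q$ in $\calF_N$. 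For (ii), the classes $\mathrm{diag}(1,d)$ act on $K_N(j)$ as $\sigma_d$ on coefficients fixing $j$, i.e.\ as $\Gal(K_N/\Q)$, so $\calF_N^{\GL_2/\{\pm I\}}=K_N(j)^{\Gal(K_N/\Q)}=\Q(j)$.

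For part \cref{opposite}, the faithfulness in (i) makes $\GL_2(\ZZ/N\ZZ)/\{\pm I\}$ a finite group of field automorphisms of $\calF_N$, so Artin's theorem gives that $\calF_N$ is Galois over its fixed field with that group, and by (ii) the fixed field is $\Q(j)$; a degree check is consistent, since $[\calF_N:K_N(j)]=|\SL_2(\ZZ/N\ZZ)/\{\pm I\}|$ by Artin applied to the $\SL_2$-part and $[K_N(j):\Q(j)]=\phi(N)=|(\ZZ/N\ZZ)^\times|$ multiply to $|\GL_2(\ZZ/N\ZZ)/\{\pm I\}|$. Because $*$ is a right action, $(f*g_1)*g_2=f*(g_1g_2)$, so the canonical isomorphism lands in $\Gal(\calF_N/\Q(j))^{\op}$. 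The step I expect to be the main obstacle is (c), specifically the identity $\sigma_d(f_a)=f_{a\,\mathrm{diag}(1,d)}$: it is this transformation law that expresses the Galois action on $q$-expansion coefficients as a geometric operation on torsion indices, welding the purely geometric $\SL_2$-symmetry of $X_N\to X_1$ to the cyclotomic arithmetic so as to produce the full $\GL_2(\ZZ/N\ZZ)/\{\pm I\}$. Once (a)--(c) are in hand the remaining arguments are formal, resting on Artin's theorem and the Galois theory of $K_N/\Q$.
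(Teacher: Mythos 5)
Your argument is correct, and it is essentially the same approach as the paper's: the paper's entire proof is the citation ``Refer to Theorem 6.6, Chapter 6 in Shimura,'' and what you have written out is precisely the standard Fricke-function proof of that theorem (the transformation law $f_a*g=f_{ag}$, Artin's theorem, and the Galois theory of $K_N/\Q$). The one caveat is that your input (b) --- the identification of $\calF_N$, defined via $q$-expansion coefficients in $K_N$, with $\Q(j,\{f_a\})$ --- is itself the substantive content of the cited theorem rather than a routine preliminary, so like the paper you are still deferring the hardest step to the reference.
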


\begin{proof}

Refer to Theorem $6.6$, Chapter $6$ in \cite{MR1291394}.
\end{proof}

\begin{remark}

We require the opposite in \cref{action on F_N}, (\ref{opposite}) since $*$ is a right action.
\end{remark}

\subsection{Modular Curves}

Let $G$ be an open subgroup of $\GL_{2}(\Zhat)$ satisfying $-I \in G$ and $\det(G) \subsetneq \Zhat^{\times}.$\label{level G} Recall that the level of $G$ is the smallest positive integer $M$ such that $G$ is the inverse image of its image under the reduction modulo $M$ map $\pi_{M} \colon \GL_2(\Zhat) \to GL_2(\ZZ/M\ZZ).$ 

Let $M$ be the level of $G$. There is a natural action of $\ZZ^{\times}$ on $K_M$ induced by the isomorphism $\Gal(\Qab/\Q) \to \ZZ^{\times}.$ Let $K_M^{\det(G)}$ denote the subfield of $K_M$ fixed under the action of $\det(G).$ Let $G_M \subseteq \GL_2(\ZZ/M\ZZ)$ be the image of $G$ under $\pi_M.$ It will follow that $K_M^{\det(G)}$ is algebraically closed in $\calF_M^{G_M}$ by \cref{action on F_N}, (\ref{opposite}). The field $\calF_M^{G_M}$ has transcendence degree $1$ over $K_M^{\det(G)}$ since it is a finite extension of $K_M(j)$. 

\begin{definition}

The \defi{modular curve} $X_G$ is the smooth compact curve defined over $K_M^{\det(G)}$ whose function field is $\calF_M^{G_M}.$ The map $\pi_G \colon X_G \to \PP^1_{K_M^{\det(G)}}$ is the nonconstant morphism corresponding to the inclusion  $K_M^{\det(G)}(j) \subseteq \calF_M^{G_M}.$
\end{definition}

Let $S$ be a scheme over $\ZZ[1/N].$ One could also define $X_{G}$ as the compactification of the coarse moduli scheme $Y_G$ defined over $\ZZ[1/N,\zeta_N]^{\det(G)}$ that parametrises elliptic curves $E$ defined over $S$ with $G$-level structure. (refer to \cite{MR0337993} chapter $4$, section 3 for details).

Let $\Gamma$ be a congruence subgroup of level $N$ that contains $-I.$ Let $\Gamma_N \subseteq \SL_2(\ZZ/N\ZZ)$ be the image of $\Gamma$ under reduction modulo $N.$

\begin{definition}\label{X_Gamma}

The algebraic curve $X_{\Gamma}$ is the nice curve over $K_N$ whose function field is $\calF_N^{\Gamma_N}.$ The map $\pi_\Gamma \colon X_{\Gamma} \to \PP^1_{K_N}$ is the morphism corresponding to the inclusion $K_N(j) \subseteq \calF_N^{\Gamma_N}.$
\end{definition}

Using the inclusion $K_N \subsetneq \CC$, we can identify $X_{\Gamma}(\CC)$ with the Riemann surface $X(\Gamma).$

Using \cref{action on F_N}, (\ref{algclosure}) we know that the field $K_M(X_G)$ which is the function field of the base extension of $X_G$ to $K_M$ is isomorphic to the function field $K_M(X_{\Gamma})$, where $\Gamma$ is conjugate to the congruence subgroup $G \intersect \SL_2(\ZZ).$ Equivalently, there exists an isomorphism $f \colon (X_{\Gamma})_{K_M} \to (X_G)_{K_M}$ that satisfies $\pi_G \circ f = \pi_{\Gamma}.$

\section{Twists}\label{sec:Twists}

\subsection{Background and preliminaries}

For background in Galois cohomology see section 5.1, Chapter 1 of \cite{MR1466966}. We recall the definitions and concepts that we need for reading this article.  

Let $G$ be a topological group and $A$ be a $G$-group i.e., $A$ is a topological group with the discrete topology, with a continuous $G$-action that respects the group law of $A$. We will assume that $G \times G$ has product topology.

\begin{definition}
\begin{itemize}
    \item A \defi{cocycle} $\zeta \colon G \to A$, is a continuous function satisfying the \defi{cocycle property}, i.e., $\zeta(\sigma \tau)=\zeta(\sigma) \cdot \sigma(\zeta(\tau))$ for every $\sigma,\tau \in G.$ We say that two cocycles $\zeta_1$ and $\zeta_2$ are \defi{cohomologous} if there is an $a \in A$ such that $\zeta_1(\sigma)=a \cdot \zeta_2(\sigma) \cdot \sigma(a^{-1})$ for every $\sigma \in G.$  
    
    \item A \defi{2-cocycle} $\mu \colon G \times G \to A$, is a continuous function satisfying the property \[\sigma_1(\mu(\sigma_2,\sigma_3)) \cdot \mu(\sigma_1,\sigma_2\sigma_3)=\mu(\sigma_1\sigma_2,\sigma_3) \cdot \mu(\sigma_1,\sigma_2)\] for every $\sigma_i \in G$ and $i \in \{1,2,3\}.$ We say that a $2$-cocycle $\mu$ is a $2$-coboundary if there exists a function $\alpha \colon G \to A$ such that $\mu(\sigma,\tau)=\alpha(\sigma) \cdot \sigma(\alpha(\tau)) \cdot \alpha(\sigma\tau)^{-1}$ for every $\sigma,\tau \in G.$ 
\end{itemize}

\end{definition}

It is easy to check that being cohomologous is an equivalence relation. We will denote the set of cocycles modulo this relation by $H^{1}(G,A).$ Let $e \colon G \to A$ be the cocycle which sends $g$ to $1$ for every $g \in G$, where $1$ is the identity element of $A.$ We say that a cocycle $\zeta \colon G \to A$ is a \defi{coboundary} if $\zeta$ is cohomologous to $e.$

Fix a number field $K.$ Fix a nice curve $X$ over $K$ with a nonconstant morphism $\pi_X \colon X \to \mathbb{P}^{1}_{K}.$ Let $L$ be a Galois extension of $K.$ 

\begin{definition} \label{L-twist}

A \defi{$L$-twist} of $(X,\pi_X)$ is a pair $(Y,\pi_Y)$ where $Y$ is a curve over $K$ and $\pi_Y$ is a morphism $Y \to \mathbb{P}^{1}_{K}$ such that there exists an isomorphism $f \colon X_L \to Y_L$ which satisfies $\pi_{Y}\circ f = \pi_{X}.$ \label{isomorphic}We say that $(X,\pi_X)$ and $(Y,\pi_Y)$ are \defi{$K$-isomorphic} if $(Y,\pi_Y)$ is a $K$-twist of $(X,\pi_X).$ 
\end{definition}

\label{Aut} Let $\Aut_L(X,\pi_X)$ be the subgroup of $\Aut(X_L)$ consisting of $f$ such that $\pi_{X} \circ f= \pi_{X}.$ Since $\pi_X$ is nonconstant, $\Aut_L(X,\pi_X)$ is a finite $\Gal(L/K)$-group. 

We will now define a map $\theta$ between the set of $L$-twists of $(X,\pi_X)$ up to $K$-isomorphism and $H^{1}(\Gal(L/K),\Aut_{L}(X,\pi_X))$. 

Let $(Y,\pi_Y)$ be a $L$-twist of $(X,\pi_X).$ So, there is an isomorphism $f \colon X_L \to Y_L$ such that $\pi_{Y}\circ f = \pi_{X}.$ The map $\xi \colon \Gal(L/K) \to \Aut_{L}(X,\pi_X)$ defined as $\xi(\sigma)=f^{-1}\circ \sigma(f)$ is a cocycle. Using that $\pi_{Y}$ and $\pi_{X}$ are defined over $K$, we get that $\pi_{X} \circ \xi(\sigma)= \pi_{X}$ for every $\sigma \in \Gal(L/K).$ We define $\theta((Y,\pi_Y))=[\xi] \in H^{1}(\Gal(L/K),\Aut_{L}(X,\pi_X)).$ 

\begin{proposition}
The map $\theta$ defined above is well-defined and is a bijection between the set of $L$-twists of $(X,\pi_X)$ up to $K$-isomorphism and $H^{1}(\Gal(L/K),\Aut_{L}(X,\pi_X)).$ 
\end{proposition}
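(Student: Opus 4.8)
The plan is to verify the four standard ingredients in turn: that $\xi(\sigma)=f^{-1}\circ\sigma(f)$ is an $\Aut_L(X,\pi_X)$-valued cocycle (carried out in the discussion preceding the statement), that the class $[\xi]$ depends only on the $K$-isomorphism class of $(Y,\pi_Y)$, that $\theta$ is injective, and that $\theta$ is surjective. Throughout I write the $\Gal(L/K)$-action on a morphism $g$ defined over $L$ as $\sigma(g)=\sigma\circ g\circ\sigma^{-1}$, using the semilinear automorphisms of $X_L$, $Y_L$ and $\PP^1_L$ coming from the fact that $X$, $Y$ and $\PP^1_K$ are defined over $K$. \emph{Well-definedness.} First I would show $[\xi]$ is independent of the chosen $f$: if $f'\colon X_L\to Y_L$ also satisfies $\pi_Y\circ f'=\pi_X$, then $a:=f^{-1}\circ f'$ lies in $\Aut_L(X,\pi_X)$, and substituting $f'=f\circ a$ yields $\xi'(\sigma)=a^{-1}\circ\xi(\sigma)\circ\sigma(a)$, so $\xi'$ and $\xi$ are cohomologous (with $a^{-1}$ in the role of the conjugating element). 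Next, if $(Y,\pi_Y)$ and $(Y',\pi_{Y'})$ are $K$-isomorphic via $h\colon Y\to Y'$ over $K$ with $\pi_{Y'}\circ h=\pi_Y$, then $h_L\circ f$ is valid twisting data for $(Y',\pi_{Y'})$, and since $h$ is defined over $K$ we have $\sigma(h_L)=h_L$, so the cocycle computed from $h_L\circ f$ equals $\xi$. Hence $\theta$ descends to $K$-isomorphism classes.

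\emph{Injectivity.} Suppose $\theta((Y,\pi_Y))=\theta((Y',\pi_{Y'}))$, with data $f\colon X_L\to Y_L$ and $f'\colon X_L\to Y'_L$. After replacing $f$ by $f\circ a$ for a suitable $a\in\Aut_L(X,\pi_X)$ (permitted by well-definedness) I may assume the two cocycles are literally equal, $\xi=\xi'$. I would then check that $g:=f'\circ f^{-1}\colon Y_L\to Y'_L$ is Galois-invariant: from $\xi(\sigma)=f^{-1}\circ\sigma(f)$ we get $\sigma(f)=f\circ\xi(\sigma)$ and likewise $\sigma(f')=f'\circ\xi(\sigma)$, whence $\sigma(g)=f'\circ\xi(\sigma)\circ\xi(\sigma)^{-1}\circ f^{-1}=g$ for all $\sigma$. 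A $\Gal(L/K)$-invariant isomorphism descends to an isomorphism $h\colon Y\to Y'$ over $K$, and $\pi_{Y'}\circ g=\pi_X\circ f^{-1}=\pi_Y$ gives $\pi_{Y'}\circ h=\pi_Y$. Thus $(Y,\pi_Y)$ and $(Y',\pi_{Y'})$ are $K$-isomorphic.

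\emph{Surjectivity.} Given a cocycle $\xi$, I would twist the canonical descent datum on $X_L$: letting $\sigma$ also denote the canonical semilinear automorphism of $X_L$, define a new semilinear action by $\sigma':=\xi(\sigma)\circ\sigma$. The cocycle relation for $\xi$ is exactly what is needed to verify $(\sigma\tau)'=\sigma'\circ\tau'$, so $\{\sigma'\}$ is again a descent datum. Since $X$ is a nice curve, $X_L$ is quasi-projective and the datum is effective: there is a curve $Y$ over $K$ and an isomorphism $f\colon X_L\to Y_L$ carrying $\sigma'$ to the canonical action on $Y_L$, i.e.\ $\sigma_Y\circ f=f\circ\sigma'$. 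Unwinding this gives $\sigma(f)=f\circ\sigma'\circ\sigma^{-1}=f\circ\xi(\sigma)$, so $f^{-1}\circ\sigma(f)=\xi(\sigma)$. Finally, because $\xi(\sigma)\in\Aut_L(X,\pi_X)$ we have $\pi_X\circ\sigma'=\pi_X\circ\sigma=\sigma\circ\pi_X$, so $\pi_X$ is compatible with $\{\sigma'\}$ on $X_L$ and the canonical datum on $\PP^1_L$; it therefore descends to $\pi_Y\colon Y\to\PP^1_K$ with $\pi_Y\circ f=\pi_X$. Then $(Y,\pi_Y)$ is an $L$-twist with $\theta((Y,\pi_Y))=[\xi]$.

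The only nontrivial input is the effectivity of Galois descent invoked in surjectivity (and, in its easy form for morphisms, in the injectivity step), and I expect this to be the main obstacle to make fully rigorous. Here it is guaranteed by the quasi-projectivity of $X_L$ via Weil's descent theorem; alternatively one can carry out the entire argument on function fields, realizing $L(Y)$ as the fixed field of $L(X)$ under the twisted $\Gal(L/K)$-action over $K(\PP^1)$, which is the form best suited to the function-field description of the modular curves used in the rest of the paper.
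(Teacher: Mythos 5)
Your argument is correct and is the standard Galois-descent/twisting argument; the paper gives no details here, deferring to Proposition 5.3 of \cite{2105.14623}, which runs along essentially the same lines (well-definedness, injectivity via a Galois-invariant isomorphism $f'\circ f^{-1}$, surjectivity via an effective twisted descent datum). The only point you gloss over is that when $L/K$ is infinite one should first observe that a continuous cocycle into the finite discrete group $\Aut_{L}(X,\pi_X)$ factors through a finite quotient $\Gal(L'/K)$, so that Weil descent applies; in the paper's applications $L/K$ is finite, so this is harmless.
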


\begin{proof}
Proof is similar to as given in Proposition $5.3$ of \cite{2105.14623}.

\end{proof}
Let $(Y,\pi_Y)$ be a $L$-twist of $(X,\pi_X)$. Fix an isomorphism $f \colon X_{L} \to Y_{L}$ that satisfies $\pi_Y \circ f=\pi_X.$ The cocycle $\zeta \colon \Gal(L/K) \to \Aut_{L}(X,\pi_X)$, $\sigma \mapsto f^{-1}\circ \sigma(f)$ is a representative of $\theta((Y,\pi_Y)).$

\begin{lemma}\label{Aut:twists}
 We have $\Aut_{L}(Y,\pi_Y)=\{f \circ g \circ f^{-1}~|~ g \in \Aut_{L}(X,\pi_X)\}.$
    
\end{lemma}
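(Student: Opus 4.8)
The claim is a description of $\Aut_L(Y,\pi_Y)$, the group of automorphisms of $Y_L$ commuting with the structure map $\pi_Y$, in terms of the corresponding group for $(X,\pi_X)$ via conjugation by the fixed isomorphism $f \colon X_L \to Y_L$. The plan is to prove the asserted equality of sets by showing each containment, or more efficiently by exhibiting an explicit bijective correspondence. The key structural fact I will exploit is that $f$ is an isomorphism satisfying $\pi_Y \circ f = \pi_X$, so conjugation by $f$ should transport the defining condition $\pi_X \circ g = \pi_X$ into the condition $\pi_Y \circ h = \pi_Y$.

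\textbf{The main computation.}
First I would take an arbitrary $g \in \Aut_L(X,\pi_X)$ and set $h := f \circ g \circ f^{-1}$. Since $f$ and $g$ are isomorphisms of $X_L$ or between $X_L$ and $Y_L$, the composite $h$ is an automorphism of $Y_L$. To check $h \in \Aut_L(Y,\pi_Y)$, I compute
\[
\pi_Y \circ h = \pi_Y \circ f \circ g \circ f^{-1} = \pi_X \circ g \circ f^{-1} = \pi_X \circ f^{-1} = \pi_Y,
\]
where the second equality uses $\pi_Y \circ f = \pi_X$, the third uses $g \in \Aut_L(X,\pi_X)$, and the last uses $\pi_X \circ f^{-1} = \pi_Y$ (which follows by composing $\pi_Y \circ f = \pi_X$ on the right with $f^{-1}$). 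This shows the right-hand set is contained in $\Aut_L(Y,\pi_Y)$.

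\textbf{The reverse containment.}
For the opposite inclusion, I would start with an arbitrary $h \in \Aut_L(Y,\pi_Y)$ and define $g := f^{-1} \circ h \circ f$, which is an automorphism of $X_L$ since $f^{-1}$ runs from $Y_L$ to $X_L$. The same style of computation, now using $\pi_X = \pi_Y \circ f$ and $\pi_X \circ f^{-1} = \pi_Y$ together with $\pi_Y \circ h = \pi_Y$, gives $\pi_X \circ g = \pi_X$, so $g \in \Aut_L(X,\pi_X)$; and clearly $h = f \circ g \circ f^{-1}$, exhibiting $h$ as an element of the right-hand set. This establishes equality.

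\textbf{Obstacles.}
I do not expect a genuine obstacle here: the statement is essentially the observation that conjugation by $f$ is a group isomorphism $\Aut(X_L) \to \Aut(Y_L)$ carrying the subgroup fixing $\pi_X$ onto the subgroup fixing $\pi_Y$. The only points requiring a little care are making sure the compositions have matching source and target (tracking that $f \colon X_L \to Y_L$, so $f \circ g \circ f^{-1}$ genuinely lands in $\Aut(Y_L)$), and correctly deriving the auxiliary identity $\pi_X \circ f^{-1} = \pi_Y$ from the hypothesis $\pi_Y \circ f = \pi_X$. Both are immediate, so the proof is a short direct verification rather than an argument with a hard step.
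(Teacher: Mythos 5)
Your proof is correct and is the standard direct verification; the paper itself gives no argument here, deferring to Lemma 5.4 of the cited reference, whose content is exactly this conjugation-transport computation. Both containments are checked correctly using $\pi_Y \circ f = \pi_X$, so nothing is missing.
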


\begin{proof}
Proof is similar to as given in Lemma $5.4$ of \cite{2105.14623}.
\end{proof}

Now assume that the curve $X$ is $\PP^1_{K}.$ Let $(Y,\pi_Y)$ be a $L$-twist  of $(X,\pi_X).$ Fix an isomorphism $f \colon X_L \to Y_L$ and let $\zeta \colon \Gal(L/K) \to \Aut_L(X,\pi_X)$ be the cocycle $\zeta(\sigma)=f^{-1}\circ \sigma(f).$

\begin{lemma}\label{lemma:coboundary P^1}

The curve $Y$ is isomorphic to $\PP^1_{K}$ if and only if $\zeta \colon \Gal(L/K) \to \Aut_L(X,\pi_X) \injects \Aut(X_L)$ is a coboundary in $H^{1}(\Gal(L/K),\Aut(X_L)).$
\end{lemma}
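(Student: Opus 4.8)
The plan is to exploit the fact that, since $X = \PP^1_K$ and $L/K$ is Galois, the twist $(Y,\pi_Y)$ is trivial—i.e. $Y \cong \PP^1_K$—exactly when its class in the relevant cohomology set is trivial, but we must be careful about \emph{which} cohomology set records isomorphism as a bare curve versus isomorphism as a pair. The cocycle $\zeta$ produced from a chosen isomorphism $f \colon X_L \to Y_L$ a priori lives in $H^1(\Gal(L/K), \Aut_L(X,\pi_X))$, where $\Aut_L(X,\pi_X)$ is the subgroup of $\Aut(X_L) = \PGL_2(L)$ fixing the structure map $\pi_X$. The statement asks instead whether $\zeta$ becomes a coboundary after pushing forward along the inclusion $\Aut_L(X,\pi_X) \injects \Aut(X_L)$, and the point of the lemma is precisely that forgetting the morphism $\pi$ is the right thing to do when testing whether $Y$ alone is a rational curve.

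First I would recall that for the bare curve $X = \PP^1_K$ one has $\Aut(X_L) = \PGL_2(L)$, and by descent theory the $K$-forms of $\PP^1_K$ that become isomorphic to $\PP^1_L$ over $L$ are classified by $H^1(\Gal(L/K), \PGL_2(L))$, with the trivial class corresponding exactly to $\PP^1_K$ itself. So the core identity to verify is: \emph{the image of $[\zeta]$ under $H^1(\Gal(L/K),\Aut_L(X,\pi_X)) \to H^1(\Gal(L/K),\Aut(X_L))$ is the class that classifies $Y$ as a bare curve.} Concretely, $\zeta(\sigma) = f^{-1} \circ \sigma(f)$ where $f$ is the chosen $L$-isomorphism $X_L \to Y_L$; the same formula, now read inside the larger group $\Aut(X_L) = \PGL_2(L)$, is the standard descent cocycle attaching to the $L/K$-form $Y$ of $\PP^1$. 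The verification is essentially unwinding definitions: one checks that the twisted Galois action on $X_L$ defined by $\sigma \cdot x := \zeta(\sigma)(\sigma(x))$ has the quotient $\PP^1_L$ descending to $Y$ over $K$, which follows since $f$ intertwines the standard $\Gal(L/K)$-action on $Y_L$ with this twisted action on $X_L$.

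Granting that identification, the lemma is immediate from the classification. If $Y \cong \PP^1_K$ then its descent class in $H^1(\Gal(L/K),\PGL_2(L))$ is trivial, so the image of $[\zeta]$ is a coboundary; conversely if the image of $\zeta$ in $H^1(\Gal(L/K),\Aut(X_L))$ is a coboundary, then $Y$ and $\PP^1_K$ have the same (trivial) descent class, hence are $K$-isomorphic as curves. I would phrase this direction using that two $L/K$-forms of $\PP^1$ are $K$-isomorphic if and only if their cocycles are cohomologous in $H^1(\Gal(L/K),\PGL_2(L))$, which is the functoriality of the twist-classifying bijection applied to the bare curve $\PP^1$ (cf. the proposition preceding this lemma, specialized to forgetting $\pi$).

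The main obstacle I expect is bookkeeping around the two different cohomology sets and making the ``forget the morphism'' step precise: the bijection established earlier in \cref{sec:Twists} classifies twists of the \emph{pair} $(X,\pi_X)$, so one must argue cleanly that passing to the enlarged coefficient group $\Aut(X_L) \supseteq \Aut_L(X,\pi_X)$ corresponds exactly to remembering only the isomorphism class of the underlying curve $Y$ and not the structure map $\pi_Y$. Once this compatibility between the two classifications is nailed down, together with the standard fact that $H^1$ of $\PGL_2$ classifies forms of $\PP^1$ with trivial class $\leftrightarrow \PP^1_K$, the equivalence drops out with no serious computation. I would therefore devote most of the write-up to the descent-cocycle identification and treat the final ``trivial class iff rational'' step as a citation to standard descent for Severi–Brauer varieties of dimension one.
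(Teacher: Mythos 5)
Your argument is correct and is essentially the standard descent argument that the paper invokes by citation (its proof of this lemma simply defers to Lemma 5.6 of \cite{2105.14623}): push the cocycle $\zeta$ forward along $\Aut_L(X,\pi_X) \injects \Aut(X_L)=\PGL_2(L)$, identify the resulting class with the descent class of $Y$ as a bare $L/K$-form of $\PP^1_K$, and conclude from the fact that the trivial class in $H^1(\Gal(L/K),\PGL_2(L))$ corresponds to $\PP^1_K$. No genuinely different route is taken, and the compatibility point you flag (forgetting $\pi_Y$ corresponds to enlarging the coefficient group) is exactly the content of the cited proof.
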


\begin{proof}

Proof is similar to as given in Lemma $5.6$ of \cite{2105.14623}.
\end{proof}

In our application, given a nice curve $(X,\pi_X)$ where $X$ is $\PP^1_{K}$ we need to find $L$-twists $(Y,\pi_Y)$ such that $Y$ is $\PP^1_{K}.$ We do this as follows. We consider the cocycles from $\Gal(L/K)$ to $\Aut_L(X,\pi_X).$ There are finitely many if $\Gal(L/K)$ is finite. We check if a cocycle \[\zeta \colon \Gal(L/K) \to \Aut_L(X,\pi_X) \injects \Aut(X_L)=\PGL_2(L) \] is a coboundary in $H^{1}(\Gal(L/K),\Aut(X_L)).$ If $\zeta$ is a coboundary in $H^{1}(\Gal(L/K),\PGL_2(L))$, then there is an $A \in \PGL_2(L)$ such that  $\zeta(\sigma)=A^{-1}\sigma(A)$ for every $\sigma \in \Gal(L/K)$; $A$ is an isomorphism between $X_L \to \PP^1_{L}$ and using $\pi_X \circ \zeta(\sigma)=\pi_X$ for every $\sigma \in \Gal(L/K)$ we observe that $\pi_X \circ A^{-1} \colon \PP^1_{K} \to \PP^{1}_{K}.$ The pair $(\PP^1_{\Q},\pi_X \circ A^{-1})$ is twist of $(X,\pi_X).$

\subsection{Determining if a cocycle is a coboundary}

In this section, we describe a method that we use to determine whether a cocycle \[\zeta \colon \Gal(L/K) \to \PGL_2(L) \] is a coboundary or not. If it is a coboundary $\Gal(L/K)$ is finite and cyclic, then we also explain how to compute a matrix $A \in \PGL_2(L)$ such that $\zeta(\sigma)=A^{-1}\sigma(A)$ for every $\sigma \in \Gal(L/K).$

Let \[\zeta \colon \Gal(L/K) \to \PGL_2(L)\] be a cocycle. The cocycle $\zeta$ determines a twist $C$ of $\PP^1_{K}$ up to isomorphism. In this section, we explain how to explicitly compute $C$ as a conic in $\PP^2_{K}.$ The Hasse principle can then be used to determine if $C(K)$ is empty or not. If $C(K)$ is nonempty, then it is isomorphic to $\PP^1_{K}$ and equivalently $\zeta$ is a coboundary. When $\zeta$ is a coboundary, we will explain how to compute an $A \in \PGL_{2}(L)$ such that $\zeta(\sigma)=A^{-1}\sigma(A)$ for every $\sigma \in \Gal(L/K).$

Let $Q_0$ be the quadratic form $y^2-xz.$ Let $C_{0}$ be the conic defined by $Q_0=0$ in $\mathbb{P}^{2}_{K}$. 

Let $\phi \colon \PGL_2(L) \to \GL_3(L)$ be the map \[[(\begin{smallmatrix}
a & b \\
c & d
\end{smallmatrix})] \mapsto \frac{1}{ad-bc}\bigg(\begin{smallmatrix}
a^{2} & 2ab & b^{2} \\
ac & ad+bc & bd \\
c^{2} & 2cd & d^{2}    
\end{smallmatrix}\bigg).\] The map $\phi$ comes from remark 3.3 in \cite{MR3906177}. It is an injective group homomorphism that respects the $\Gal(L/K)$-action. The image of $\phi$ gives automorphisms of $\PP^2_L$ that stabilizes $(C_0)_L.$ The map $\phi$ induces an isomorphism $\Bar{\phi} : \Aut(\PP^{1}_{L})=\PGL_2(L) \to \Aut((C_0)_{L})$ that respects the $\Gal(L/K)$- action. One can check that $\Bar{\phi}$ is induced by the isomorphism $\PP^1_{K} \to C_0$ that sends $[x:y]$ to $[x^2:xy:y^2].$

Define the map \[\bar{\zeta}:=\phi \circ \zeta \colon \Gal(L/K) \to \GL_3(L).\] Since $\phi$ is a group homomorphism and respects the $\Gal(L/K)$-action, $\Bar{\zeta}: \Gal(L/K) \to \GL_3(L)$ is a cocycle. By Hilbert 90, there is an $M \in \GL_{3}(L)$ such that $\Bar{\zeta}(\sigma)=M^{-1}\sigma(M)$ for every $\sigma \in \Gal(L/K).$ We explain how to compute $M$ in \cref{Computing H90 mat}.

\begin{lemma}\label{twist as a conic}

The quadratic form $Q:=Q_{0}(M^{-1}(x,y,z)^{T})$ has coefficients in $K$ and the conic $C \subseteq \PP^2_{K}$ defined by $Q=0$ is isomorphic to the twist of $\PP^1_{K}$ by $\zeta.$
\end{lemma}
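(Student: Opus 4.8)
The plan is to verify the two assertions in turn: first that $Q$ has coefficients in $K$, and then that the conic $C=\{Q=0\}$ is the twist of $\PP^1_K$ cut out by $\zeta$. Both rest on a single computational fact about the normalization of $\phi$: for every $g\in\PGL_2(L)$ the matrix $\phi(g)$ preserves $Q_0$ \emph{exactly}, i.e. $Q_0(\phi(g)\,v)=Q_0(v)$ for all $v$. This is slightly stronger than the set-theoretic stabilization of $(C_0)_L$ recorded above, and I would confirm it by direct substitution: the factor $\tfrac{1}{ad-bc}$ forces $\det\phi(g)=1$ and $\phi(g)^{T}S_0\,\phi(g)=S_0$, where $S_0$ is the Gram matrix of $Q_0$. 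Since preservation of $Q_0$ is multiplicative and $\phi$ is a homomorphism, it suffices to check it on the generators $(\begin{smallmatrix}\lambda&0\\0&1\end{smallmatrix})$, $(\begin{smallmatrix}1&1\\0&1\end{smallmatrix})$, $(\begin{smallmatrix}0&1\\1&0\end{smallmatrix})$ of $\PGL_2(L)$, where it is immediate.

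For the first assertion I would compute the action of $\sigma\in\Gal(L/K)$ on the coefficients of $Q$. Since $Q_0$ has rational coefficients, $\sigma$ acts only through $M$, so $\sigma(Q)(v)=Q_0\big(\sigma(M)^{-1}v\big)$. The Hilbert~90 relation $\bar\zeta(\sigma)=M^{-1}\sigma(M)$ gives $\sigma(M)^{-1}=\bar\zeta(\sigma)^{-1}M^{-1}$, whence
\[
\sigma(Q)(v)=Q_0\big(\phi(\zeta(\sigma))^{-1}M^{-1}v\big)=Q_0\big(\phi(\zeta(\sigma)^{-1})\,(M^{-1}v)\big).
\]
Applying the exact-preservation fact to $g=\zeta(\sigma)^{-1}$ collapses the right-hand side to $Q_0(M^{-1}v)=Q(v)$. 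Thus $\sigma(Q)=Q$ for every $\sigma$, so $Q$ is $\Gal(L/K)$-invariant and therefore has coefficients in $K$; in particular $C$ is a conic defined over $K$.

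For the second assertion I would exhibit an explicit $L$-isomorphism $\PP^1_L\to C_L$ and read off its cocycle. Let $\nu\colon\PP^1\to C_0$, $[x:y]\mapsto[x^2:xy:y^2]$, be the $K$-rational parametrization inducing $\bar\phi$, and set $h:=M\circ\nu\colon\PP^1_L\to C_L$. This is an isomorphism because $M$ carries $(C_0)_L$ onto $C_L$: if $Q_0(v)=0$ then $Q(Mv)=Q_0(M^{-1}Mv)=Q_0(v)=0$. Because $\nu$ is defined over $K$ we have $\sigma(h)=\sigma(M)\circ\nu$, so
\[
h^{-1}\circ\sigma(h)=\nu^{-1}\circ\big(M^{-1}\sigma(M)\big)\circ\nu=\nu^{-1}\circ\phi(\zeta(\sigma))\circ\nu .
\]
The defining property of $\bar\phi$ — that it is induced by $\nu$ — is precisely $\nu^{-1}\circ\phi(g)\circ\nu=g$ inside $\PGL_2(L)=\Aut(\PP^1_L)$, so the right-hand side equals $\zeta(\sigma)$. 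Hence $h$ realizes $C$ as a genus-$0$ curve over $K$ whose associated cocycle $\sigma\mapsto h^{-1}\sigma(h)$ is exactly $\zeta$. By Galois descent for forms of $\PP^1$ (the case $X=\PP^1_K$ of the correspondence $\theta$ established above, with $\Aut(\PP^1_L)=\PGL_2(L)$), such a curve is determined up to $K$-isomorphism by the class of its cocycle in $H^1(\Gal(L/K),\PGL_2(L))$, so $C$ is $K$-isomorphic to the twist of $\PP^1_K$ by $\zeta$.

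The main obstacle is bookkeeping rather than depth. I must pin down the normalization so that $\phi(g)$ fixes $Q_0$ on the nose and not merely up to a scalar, since only the exact identity forces $Q$ to land in $K$ — with a scalar one would conclude only that $C=\{Q=0\}$ is $K$-rational as a variety, not that $Q$ itself has $K$-coefficients. The second delicate point is keeping the variances straight: the action $*$ and the cocycle formalism of \cref{sec:Twists} use a right action and an opposite group, so I must verify that the cocycle $h^{-1}\sigma(h)$ comes out as $\zeta$ itself rather than $\zeta^{-1}$ or a conjugate, which is what the identity $\nu^{-1}\phi(g)\nu=g$ guarantees. Once these conventions are fixed, both parts reduce to the short substitutions above.
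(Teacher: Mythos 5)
Your proof is correct and follows the same route as the paper, whose entire argument is the single observation that $\bar\zeta(\sigma)$ preserves the equation of $Q_0$; you have simply made precise the two points the paper leaves implicit, namely that the normalization of $\phi$ gives $Q_0(\phi(g)v)=Q_0(v)$ exactly (not merely up to scalar) and that the map $h=M\circ\nu$ realizes $\zeta$ as the descent cocycle of $C$, which the paper does not prove at all. No gaps.
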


\begin{proof}
Since the cocycle $\Bar{\zeta}(\sigma)$ preserves the equation for $Q_{0}$ for every $\sigma \in \Gal(L/K)$, it follows that $Q$ has coefficients in $K$. 
\end{proof}

From \cref{twist as a conic}, $M$ gives a conic $C \subseteq \PP^2_{K}$ which is a twist of $C_0$ by $\zeta.$ The cocycle $\zeta$ is a coboundary if and only if $C$ has a rational point. We use \texttt{Magma} \cite{MR1484478} to check this.

Suppose that the conic $C$ has a rational point. Choose a lift \[\Tilde{\zeta} \colon \Gal(L/K) \to \GL_2(L).\] Using $\Tilde{\zeta}$ we define $\mu \colon \Gal(L/K) \times \Gal(L/K) \to L^{\times}$ as $$\mu(\sigma,\tau):=\Tilde{\zeta}(\sigma)(\sigma(\Tilde{\zeta}(\tau))(\Tilde{\zeta}(\sigma\tau)^{-1}).$$ It can be easily verified that $\mu$ is a $2$-cocycle. 

\begin{proposition}
The cocycle $\zeta \colon \Gal(L/K) \to \PGL_2(L)$ is a coboundary if and only if the $2$-cocycle $\mu \colon \Gal(L/K) \times \Gal(L/K) \to L^{\times}$ is a $2$-coboundary. 
\end{proposition}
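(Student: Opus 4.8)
The plan is to recognize $\mu$ as the obstruction class attached to the central extension of $\Gal(L/K)$-groups
\[ 1 \to L^{\times} \to \GL_2(L) \xrightarrow{\,p\,} \PGL_2(L) \to 1, \]
where $L^{\times}$ is embedded as the scalar matrices (these are central, so the extension is central) and all maps are $\Gal(L/K)$-equivariant. The argument then splits into two reductions, which I would carry out in order: first, that $\zeta$ is a coboundary in $H^1(\Gal(L/K),\PGL_2(L))$ exactly when $\zeta$ admits a lift to a genuine cocycle valued in $\GL_2(L)$; and second, that such a cocycle lift exists exactly when the scalar-valued $2$-cocycle $\mu$ is a $2$-coboundary.

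For the first reduction I would invoke Hilbert's Theorem $90$ in the form $H^1(\Gal(L/K),\GL_2(L))=1$. If $\zeta(\sigma)=\Bar{A}^{-1}\sigma(\Bar{A})$ for some $\Bar{A}\in\PGL_2(L)$, I would lift $\Bar{A}$ to $A\in\GL_2(L)$ and set $\eta(\sigma):=A^{-1}\sigma(A)$; this $\eta$ is automatically a $\GL_2(L)$-cocycle and satisfies $p\circ\eta=\zeta$. Conversely, given any cocycle $\eta\colon\Gal(L/K)\to\GL_2(L)$ with $p\circ\eta=\zeta$, Hilbert $90$ forces $\eta(\sigma)=A^{-1}\sigma(A)$ for some $A\in\GL_2(L)$, and applying $p$ exhibits $\zeta$ as a coboundary. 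Hence being a coboundary and admitting a cocycle lift are equivalent.

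The heart of the proof is the second reduction, a direct computation. Any set-theoretic lift of $\zeta$ differs from the fixed $\Tilde{\zeta}$ by a function $c\colon\Gal(L/K)\to L^{\times}$, so I would consider the candidate $\Tilde{\zeta}'(\sigma)=c(\sigma)\Tilde{\zeta}(\sigma)$ and ask when it is a cocycle. Expanding $\Tilde{\zeta}'(\sigma\tau)=\Tilde{\zeta}'(\sigma)\sigma(\Tilde{\zeta}'(\tau))$, using that the scalars $c(\sigma)$ and $\sigma(c(\tau))$ are central, and substituting $\Tilde{\zeta}(\sigma)\sigma(\Tilde{\zeta}(\tau))=\mu(\sigma,\tau)\Tilde{\zeta}(\sigma\tau)$, the matrix factor $\Tilde{\zeta}(\sigma\tau)$ cancels and the cocycle condition collapses to the scalar identity $c(\sigma\tau)=c(\sigma)\,\sigma(c(\tau))\,\mu(\sigma,\tau)$. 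Rearranging, this says exactly that $\mu(\sigma,\tau)=\alpha(\sigma)\,\sigma(\alpha(\tau))\,\alpha(\sigma\tau)^{-1}$ with $\alpha:=c^{-1}$, i.e. that $\mu$ is a $2$-coboundary. Thus a cocycle lift exists if and only if $\mu$ is a $2$-coboundary, and combining this with the first reduction yields the proposition.

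The step I expect to require the most care is checking that the cancellation in the second reduction is legitimate: it relies crucially on $L^{\times}$ being central in $\GL_2(L)$, so that $\sigma(c(\tau))$ commutes past $\Tilde{\zeta}(\sigma)$, and on the fact already noted in the text that $\mu$ genuinely takes values in $L^{\times}$, which in turn uses that $\zeta$ is a $\PGL_2(L)$-cocycle. The remaining bookkeeping point is to keep the inverse conventions for $1$- and $2$-coboundaries consistent with the definitions fixed earlier.
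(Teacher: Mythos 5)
Your proof is correct and follows essentially the same route as the paper: both directions come down to the observation that $\zeta$ is a coboundary precisely when it lifts to a genuine $\GL_2(L)$-cocycle (via Hilbert 90), and that such a lift, necessarily of the form $c(\sigma)\Tilde{\zeta}(\sigma)$, is a cocycle exactly when $\mu$ is the $2$-coboundary of $c^{-1}$. Your framing via the central extension $1 \to L^{\times} \to \GL_2(L) \to \PGL_2(L) \to 1$ makes the structure of the paper's computation explicit, but the underlying argument is the same.
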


\begin{proof}
We will show that if $\zeta$ is a coboundary, then $\mu$ is a $2$-coboundary. Assume that $\zeta$ is a coboundary, i.e., there exists an element $A \in \PGL_2(L)$ such that $\zeta(\sigma)=A^{-1}\sigma(A)$ for every $\sigma \in \Gal(L/K).$ Assume that $\Tilde{\zeta}(\sigma)=\zeta(\sigma)\alpha(\sigma)$, where $\alpha(\sigma)$ is some element of $L^{\times}.$ Then we can check that $\mu(\sigma,\tau)=\alpha(\sigma)(\sigma(\alpha(\tau)))\alpha(\sigma\tau)^{-1}.$ Therefore, $\mu$ is a $2$-coboundary.

Now assume that $\mu$ is $2$-coboundary, i.e., there exists a function $\alpha \colon \Gal(L/K) \to L^{\times}$ such that $\mu(\sigma,\tau)=\alpha(\sigma)(\sigma(\alpha(\tau)))\alpha(\sigma\tau)^{-1}.$ Define $\phi \colon \Gal(L/K) \to \GL_2(L)$ as $\phi(\sigma)=\Tilde{\zeta}(\sigma)\alpha(\sigma)^{-1}.$ Using the definition of $\mu$ we can check that $\phi$ is a cocycle. Hilbert $90$ states that there exists a matrix $A \in \GL_2(L)$ such that $\phi(\sigma)=A^{-1}\sigma(A)$ for every $\sigma \in \Gal(L/K).$ So, it follows that $\zeta \colon \Gal(L/K) \to \PGL_2(L)$ is a coboundary.
\end{proof}

We know that $\mu$ is a coboundary if and only if there exists a map $$f \colon \Gal(L/K) \to L^{\times}$$ such that $$\mu(\sigma,\tau)=f(\sigma)\sigma(f(\tau))f(\sigma\tau)^{-1}.$$

We are interested in computing the function $f$ and the following Corollary $2.2$ from \cite{Preu2013} is useful.

\begin{proposition}\label{prop:f}
Suppose $\Gal(L/K)$ is a finite cyclic group of order $n$ generated by $\sigma$ and $\mu \colon \Gal(L/K) \times \Gal(L/K) \to L^{\times}$ is a $2$-cocycle. If $\mu$ is a $2$-coboundary then the associated function $f \colon \Gal(L/K) \to L^{\times}$ is given by the following formula: 
\[f(\sigma^{i}):=(\mu(\sigma^{i},\sigma^n))(\prod_{j=0}^{i-1}((\mu(\sigma^{j},\sigma)(\sigma^{j}.a')))^{-1}\] for every $i \in \{1,\ldots,n\}.$ Here $a'$ is an element of $L^{\times}$ such that norm of $a'$ is \[(\prod_{j=0}^{n-1}\mu(\sigma^{j},\sigma))^{-1}.\]
\end{proposition}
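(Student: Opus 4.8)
The plan is to verify directly that the function $f$ given by the displayed formula satisfies the coboundary identity $\mu(\sigma^a,\sigma^b) = f(\sigma^a)\,\sigma^a(f(\sigma^b))\,f(\sigma^{a+b})^{-1}$ for all $a,b$, since this is precisely the assertion that $f$ exhibits $\mu$ as a $2$-coboundary. I would first record two elementary consequences of the $2$-cocycle identity that are needed to absorb the non-normalized prefactors $\mu(\sigma^i,\sigma^n)=\mu(\sigma^i,e)$ (recall $\sigma^n=e$): specializing two of the three arguments to the identity gives $\mu(e,h)=\mu(e,e)$ and $\mu(g,e)=g(\mu(e,e))$ for all $g,h\in\Gal(L/K)$.

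The core of the argument is a reduction to the diagonal. Write $\partial f(\sigma^a,\sigma^b):=f(\sigma^a)\sigma^a(f(\sigma^b))f(\sigma^{a+b})^{-1}$; this is automatically a $2$-cocycle for any function $f\colon\Gal(L/K)\to L^{\times}$, and $\mu$ is a $2$-cocycle by hypothesis. Specializing the cocycle identity to $(\sigma^a,\sigma^b,\sigma)$ yields the recursion $\mu(\sigma^a,\sigma^{b+1})=\mu(\sigma^{a+b},\sigma)\,\mu(\sigma^a,\sigma^b)\,\sigma^a(\mu(\sigma^b,\sigma))^{-1}$, and the identical relation holds for $\partial f$. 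Consequently any two $2$-cocycles that agree on the column $b=0$ and on the diagonal values $(\sigma^a,\sigma)$ (the column $b=1$) agree everywhere, by induction on $b$. It therefore suffices to check these two families.

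For the diagonal I would substitute the formula into $\partial f(\sigma^a,\sigma)$ and telescope: writing $P_i:=\prod_{j=0}^{i-1}\mu(\sigma^j,\sigma)\sigma^j(a')$, one has $P_{a+1}=P_a\cdot\mu(\sigma^a,\sigma)\sigma^a(a')$, so in the abelian group $L^{\times}$ all the $P_a$ and $\sigma^a(a')$ factors cancel, and the two preliminary identities above collapse the remaining prefactors, leaving exactly $\mu(\sigma^a,\sigma)$. The column $b=0$ reduces to the single requirement $f(e)=\mu(e,e)$, since $\partial f(\sigma^a,e)=\sigma^a(f(e))$ while $\mu(\sigma^a,e)=\sigma^a(\mu(e,e))$.

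This last requirement is exactly where the norm hypothesis enters, and I expect it to be the main (indeed the only substantive) obstacle. Evaluating the formula at $i=n$ gives $f(e)=f(\sigma^n)=\mu(e,e)\,P_n^{-1}$, where $P_n=\big(\prod_{j=0}^{n-1}\mu(\sigma^j,\sigma)\big)\,N_{L/K}(a')$ because $\prod_{j=0}^{n-1}\sigma^j(a')=N_{L/K}(a')$. The hypothesis $N_{L/K}(a')=\big(\prod_{j=0}^{n-1}\mu(\sigma^j,\sigma)\big)^{-1}$ forces $P_n=1$, hence $f(e)=\mu(e,e)$; equivalently, it is precisely the condition that closes the telescoping around the cycle $\sigma^n=e$ so that $f$ is a well-defined function on $\Gal(L/K)$. (The existence of such an $a'$ is guaranteed by the assumption that $\mu$ is a coboundary, through the identification $H^2(\Gal(L/K),L^{\times})\cong K^{\times}/N_{L/K}(L^{\times})$ for cyclic extensions.) Once $P_n=1$ is established, feeding the verified diagonal and $b=0$ columns through the recursion completes the proof.
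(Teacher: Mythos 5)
Your verification is correct: the two degenerate cocycle identities $\mu(e,h)=\mu(e,e)$ and $\mu(g,e)=g(\mu(e,e))$, the recursion obtained from $(\sigma^a,\sigma^b,\sigma)$, the reduction to the columns $b=0$ and $b=1$, the telescoping of $P_{a+1}=P_a\cdot\mu(\sigma^a,\sigma)\sigma^a(a')$ on the column $b=1$, and the role of the norm condition in forcing $P_n=1$ (which is exactly what makes $f(\sigma^{n+1})=f(\sigma)$ consistent and gives $f(e)=\mu(e,e)$) all check out. Note, however, that the paper offers no proof of this statement at all: it is quoted verbatim as Corollary~2.2 of the cited work of Preu, so there is nothing in the paper to compare your argument against line by line. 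Your route --- a direct check that $\partial f=\mu$ by showing two $2$-cocycles agreeing on two columns agree everywhere --- is elementary and self-contained, whereas Preu derives the formula structurally, by chasing an explicit comparison between the standard (bar) resolution and the $2$-periodic resolution of $\ZZ$ over $\ZZ[G]$ for cyclic $G$, which is what produces the formula in the first place rather than merely certifying it. Your approach buys a shorter, citation-free verification that the given $f$ works; the resolution-comparison approach explains where $a'$ and the product $\prod_j\mu(\sigma^j,\sigma)$ come from (the identification $H^2(G,L^{\times})\cong K^{\times}/N_{L/K}(L^{\times})$ that you correctly invoke, in passing, to justify the existence of $a'$). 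Both are valid; yours is arguably the more appropriate proof to include in a paper that only needs to use the formula computationally.
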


After we compute $f$ we define a function $\phi \colon \Gal(L/K) \to \GL_2(L)$ as $\phi(\sigma)=\Tilde{\zeta}(\sigma)f(\sigma)^{-1}.$ We can check that  $\phi$ is a 1-cocyle. Hilbert90 states that there exists a matrix $A \in \GL_2(L)$ such that $\phi(\sigma)=A^{-1}\sigma(A)$ for every $\sigma \in \Gal(L/K).$








\subsection{Computing Hilbert 90 matrices}\label{Computing H90 mat}

Let $K$ be a number field and let $L$ be a finite Galois extension of $K$ and define $G:= \Gal(L/K)$. Hilbert 90 states that $H^{1}(G,\GL_{n}(L))=1.$ Equivalently, given a cocycle $\psi \colon G \to \GL_{n}(L)$ there exists a matrix $A\in \GL_{n}(L)$ such that $\psi(\sigma)=A^{-1}\sigma(A)$ for every $\sigma \in G.$ We now describe how to find $A$.

\begin{itemize}
    \item Define the $L$-vector space $V:=L^n.$ Define a new $G$-action on $V$ by $\sigma*v=\psi(\sigma)\sigma(v)$; it acts $K$-linearly. Let $W$ be the $K$-subspace of $V$ fixed by this $G$-action. Using Proposition $7 (A.V.63)$ of \cite{MR1994218}, the natural map $L \tensor_{K} W \to V$ of $L$-vector spaces is an isomorphism. In particular, $W$ has dimension $n$ over $K$. 
    
    \item Fix a basis $\frakA$ for $L$ considered as a vector space over $K$ and a basis $\frakB$ for $V$ considered as a vector space over $L$. The set $B=\{b.v~|~b \in \frakA, v \in \frakB\}$ is a basis of $V$ over $K.$
    
    \item The $K$-linear map $V\to W$ that sends $v$ to $1/|G|(\Sigma_{g \in G} g*v)$ is a projection. Therefore, the set $C =\{1/|G|(\Sigma_{g \in G} g*u) : u \in B\}$ spans $W$ over $K$.
    
    \item Choose $n$ linearly independent vectors from $C$; they form a basis for $W$ and hence also a basis of $V$ over $L.$ Let $A \in \GL_n(L)$ be the change of basis matrix from our basis in C to the standard basis of $L^n$. We have $\psi(\sigma)=A^{-1}\sigma(A)$ for all $\sigma \in G.$  
\end{itemize}

\section{Fields of Definition}

We are interested in computing genus $0$ modular curves $(X_G,\pi_G)$ of prime power levels with a point such that $\det(G)$ is a proper subgroup of $\Zhat^{\times}.$ As explained in \cref{sec:Modular Curves} such a curve $(X_G,\pi_G)$ will be defined over a subfield of $K_{p^m}$, where $p^m$ is the level of $G.$ In this section we explain that it is sufficient to consider the subfields of $K_{p^n}$, where $p^n$ is the level of $G \intersect \SL_2(\Zhat).$ 
Let us discuss the procedure that we use to compute $(X_G,\pi_G).$ We fix a genus $0$ congruence subgroup $\Gamma$ of prime power level $p^n$ that contains $-I.$ Let $h$ be the normalized hauptmodul of $\Gamma.$ Please refer to section $4$ of \cite{2105.14623} for a discussion on hauptmoduls and their computations for any genus $0$ congruence subgroup. Since the coefficients of $q$-expansion of $h$ lie in $K_{p^n}$, the function field of $X_{\Gamma}$ is $K_{p^n}(h).$ There is a unique $\pi_{\Gamma}$ in $K_{p^n}(t)$ such that $\pi_{\Gamma}(h)=j$ where $j$ is the modular $j$-invariant. To compute $\pi_{\Gamma}$ we use the method described in section $4.4$ of \cite{MR3671434}. The function $\pi_{\Gamma}$ describes a morphism $\pi_{\Gamma} \colon X_{\Gamma} \to \mathbb{P}^1_{K_{p^n}}.$ 

Suppose there is an open subgroup $G$ of $\GL_2(\Zhat)$ containing $-I$ for which $G \intersect \SL_2(\ZZ)$ is $\Gamma.$ Let $p^m$ be the level of $G.$ We know that $m$ is greater than or equal to $n.$ There is an isomorphism $f \colon (X_{\Gamma})_{K_{p^m}} \to (X_{G})_{K_{p^m}}$ satisfying $\pi_{G} \circ f =\pi_{\Gamma}.$ We observe that $K^{\det(G)}_{p^m}(h)$ is a function field for a model of $X_{\Gamma}$, isomorphic to $\mathbb{P}^{1}_{K^{\det(G)}_{p^m}}.$ Since we have models for $X_{\Gamma}$ and $X_G$ over $K^{\det(G)}_{p^m}$ there is a natural action of $\Gal(K_{p^m}/K^{\det(G)}_{p^m})$ on $f.$ Using that $\pi_G$ is defined over $K^{\det(G)}_{p^m}$ we see that $f$ satisfies the following condition
\begin{equation}\label{equ:coc}
    \sigma(\pi_{\Gamma})=\pi_{\Gamma} \circ f^{-1}\circ \sigma(f)
\end{equation}
for every $\sigma \in \Gal(K_{p^m}/K^{\det(G)}_{p^m}).$

\cref{equ:coc} gives us a condition on the image of the cocycle \[\zeta \colon \Gal(K_{p^m}/K^{\det(G)}_{p^m}) \to \Aut((X_{\Gamma})_{K_{p^m}})=\PGL_2(K_{p^m})\] that sends $\sigma$ to $f^{-1}\circ \sigma(f).$ Further, we have also checked that image of $\zeta$ lies in $\PGL_2(K_{p^n})$, where $p^n$ is the level of $\Gamma.$ 

For a fixed $m$, there are only finitely many cocycles from $\Gal(K_{p^m}/K^{\det(G)}_{p^m})$ to $\PGL_2(K_{p^m})$ that satisfy \cref{equ:coc}. We compute the cocycles on a set of generators of $\Gal(K_{p^m}/K^{\det(G)}_{p^m})$ and extend to the whole group using the cocycle property. Let $\zeta$ be one such cocycle. From \cref{lemma:coboundary P^1} and discussion in the paragraph following it, we know that if $\zeta$ is a coboundary in $H^{1}(\Gal(K_{p^m}/K^{\det(G)}_{p^m}),\PGL_2(K_{p^m}))$, then $\zeta$ gives a pair $(\PP^1_{K^{\det(G)}_{p^m}},\pi)$ which is isomorphic to modular curve $(X_G,\pi_G.)$ 

We will now proceed to show that $X_G$ is defined over a subfield of $K_{p^n}$. Recall that $p^n$ is the level of congruence subgroup $G \intersect \SL_2(\ZZ).$

Suppose that $p$ is an odd prime. We know that $K_{p^M}$ is a cyclic extension of $\Q$ for any positive integer $M.$ Therefore, the number of subfields of $K_{p^M}$ that contain $\Q$ is equal to $\omega(p^{M-1}(p-1))$ which is equal to $M\omega(p-1).$ When we increase our field to $K_{p^{(n+1)}}$ from $K_{p^n}$ we get exactly $\omega(p-1)$ more subfields that correspond to divisors of $p-1$, i.e., corresponding to a divisor $d$ of $p-1$, we get a subfield $\Q \subseteq K \subseteq K_{p^{(n+1)}}$ such that $\Gal(K_{p^{(n+1)}}/K)$ is of order $d.$ In case the level of $\Gamma$ is $2^m$ for some positive integer $m$, then we assume that $m$ is bigger than or equal to $3$, if $m$ is equal to $1$ or $2$ then we can increase the field from $K_2$ or $K_4$ to $K_8.$ When we increase the field from $K_{2^m}$ to $K_{2^{m+1}}$ we get $3$ additional subfields of $K_{2^{m+1}}$ that contain $\Q.$ For each of these subfields $K$, $\Gal(K_{2^{m+1}}/K)$ is of order $2.$

Define $K':=K_{p^n} \intersect K.$ The Galois group $\Gal(K_{p^n}/K')$ is isomorphic to $\Gal(K_{p^{n+1}}/K).$ We know that $\pi_{\Gamma}$ is defined over $K_{p^n}$ so cocycles from $\Gal(K_{p^{n+1}}/K) \to \PGL_2(K_{p^{n+1}})$ satisfying \cref{equ:coc} are same as cocycles from $\Gal(K_{p^n}/K') \to \PGL_2(K_{p^n})$ satisfying \cref{equ:coc}. Therefore, we can assume that such a genus $0$ modular curve $(X_G,\pi_G)$ such that $G \intersect \SL_2(\ZZ)$ is defined over a subfield of $K_{p^n}.$ 

\subsection{Extending the upper field}

Fix a genus $0$ congruence subgroup $(\Gamma,\pi_{\Gamma})$ containing $-I$ of level $p^n.$ If the level is $2$ or $4$, we extend the level to $8.$ Fix a field $\Q \subseteq K \subseteq K_{p^n}.$ Let $K_{p^{\infty}}$ denote the compositum of fields $K_{p^m}$ with $m$ greater than or equal to $1.$ In this section, we will explain that there are finitely many cocycles from $\Gal(K_{p^{\infty}}/K) \to \PGL_2(K_{p^{\infty}})$ that satisfy \cref{equ:coc}. Let $\Aut_{K_{p^{\infty}}}(X_{\Gamma},\pi_{\Gamma})$ denote the group of isomorphisms $f \colon (X_{\Gamma})_{K_{p^{\infty}}} \to (X_{\Gamma})_{K_{p^{\infty}}}$ that satisfy $\pi_{\Gamma} \circ f= \pi_{\Gamma}.$ We have checked that for every genus $0$ congruence subgroup $\Gamma$ containing $-I$ the group $\Aut_{K_{p^{\infty}}}(X_{\Gamma},\pi_{\Gamma})$ is defined over $K_{p^n}$, i.e., $\Aut_{K_{p^{\infty}}}(X_{\Gamma},\pi_{\Gamma})=\Aut_{K_{p^n}}(X_{\Gamma},\pi_{\Gamma}).$ 

\begin{lemma}\label{vertical cocycles}

Let $b$ be the least common multiple of orders of elements in $\Aut_{K_{p^n}}(X_{\Gamma},\pi_{\Gamma})$ that divide some power of $p.$ Any cocycle from $\Gal(K_{p^{\infty}}/K) \to \PGL_2(K_{p^{\infty}})$ that satisfy \cref{equ:coc} factors through $\Gal(K_{bp^n}/K).$ 

\end{lemma}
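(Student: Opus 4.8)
The plan is to restrict $\zeta$ to the subgroup $N_0 := \Gal(K_{p^\infty}/K_{p^n})$ of $G := \Gal(K_{p^\infty}/K)$, where it becomes an ordinary homomorphism into the finite group $A := \Aut_{K_{p^n}}(X_{\Gamma},\pi_{\Gamma})$, and then to read the order constraint straight off the definition of $b$. First I would record that $b$ is itself a power of $p$: an element of $A$ whose order divides some power of $p$ is exactly an element of $p$-power order, and the least common multiple of a set of powers of $p$ is again a power of $p$, say $b=p^c$. Consequently $K_{bp^n}=K_{p^{n+c}}$ and $N:=\Gal(K_{p^\infty}/K_{bp^n})=\Gal(K_{p^\infty}/K_{p^{n+c}})\subseteq N_0$, since $K\subseteq K_{p^n}\subseteq K_{bp^n}$.

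Next I would analyze $\zeta|_{N_0}$. Because $\pi_{\Gamma}$ is defined over $K_{p^n}$, every $\sigma\in N_0$ fixes $\pi_{\Gamma}$, so \cref{equ:coc} forces $\pi_{\Gamma}\circ\zeta(\sigma)=\pi_{\Gamma}$, i.e. $\zeta(\sigma)\in\Aut_{K_{p^\infty}}(X_{\Gamma},\pi_{\Gamma})$. By the assumption recorded just before the lemma this group equals $A$, which sits inside $\PGL_2(K_{p^n})$. Since $N_0$ fixes $K_{p^n}$ pointwise, the Galois action of $N_0$ on $A$ (on matrix entries) is trivial; hence the cocycle identity $\zeta(\sigma\tau)=\zeta(\sigma)\,\sigma(\zeta(\tau))$ collapses to $\zeta(\sigma\tau)=\zeta(\sigma)\zeta(\tau)$, so $\zeta|_{N_0}\colon N_0\to A$ is a continuous group homomorphism.

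The crux is the next step. Via the cyclotomic character, $N_0\cong 1+p^n\ZZ_p\cong\ZZ_p$ is procyclic and pro-$p$ (here the standing hypothesis $n\ge 3$ at $p=2$ is precisely what secures procyclicity). Thus the image of $\zeta|_{N_0}$ is a finite cyclic $p$-group, generated by an element of $A$ of $p$-power order, whose order therefore divides $b=p^c$ by the definition of $b$. Hence $\zeta|_{N_0}$ annihilates the subgroup of $p^c$-th powers of $N_0$; under $N_0\cong\ZZ_p$ this subgroup is $p^c\ZZ_p$, corresponding to $1+p^{n+c}\ZZ_p=\Gal(K_{p^\infty}/K_{p^{n+c}})=N$. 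In other words $\zeta|_N$ is trivial.

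Finally I would upgrade the triviality of $\zeta|_N$ to genuine factorization through $\Gal(K_{bp^n}/K)$, which is formal. As $K_{bp^n}/K$ is abelian, $N$ is normal in $G$. For $\sigma\in G$ and $\nu\in N$ the cocycle relation gives $\zeta(\sigma\nu)=\zeta(\sigma)\sigma(\zeta(\nu))=\zeta(\sigma)$, so $\zeta$ is constant on the cosets $\sigma N$ and descends to a well-defined function on $G/N=\Gal(K_{bp^n}/K)$. Computing $\zeta(\nu\sigma)$ two ways via the cocycle relation, using $\nu\sigma=\sigma\,(\sigma^{-1}\nu\sigma)$ with $\sigma^{-1}\nu\sigma\in N$, yields $\nu(\zeta(\sigma))=\zeta(\sigma)$ for all $\nu\in N$, so each value lies in $\PGL_2(K_{p^\infty})^{N}=\PGL_2(K_{bp^n})$; the descended function is then a bona fide cocycle on $\Gal(K_{bp^n}/K)$ inflating to $\zeta$. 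The only real obstacle is the middle step, and I would flag its two inputs: the procyclic pro-$p$ structure of $N_0$ (needing $n\ge 3$ when $p=2$), and the assumption $\Aut_{K_{p^\infty}}(X_{\Gamma},\pi_{\Gamma})=\Aut_{K_{p^n}}(X_{\Gamma},\pi_{\Gamma})$, which is exactly what places $\zeta|_{N_0}$ into a group defined over $K_{p^n}$ so that the $N_0$-action is trivial.
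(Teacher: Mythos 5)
Your proof is correct and follows essentially the same route as the paper's: restrict $\zeta$ to the subgroup fixing $K_{p^n}$, observe that equation (5.1) forces its values into $\Aut_{K_{p^n}}(X_{\Gamma},\pi_{\Gamma})$ and that the restriction is a homomorphism on a (pro)cyclic pro-$p$ group, so the definition of $b$ kills it past level $bp^n$. Your write-up is in fact slightly more complete, since you carry out the final formal descent of the cocycle to $\Gal(K_{bp^n}/K)$ (constancy on cosets and $N$-invariance of the values), a step the paper leaves implicit.
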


\begin{proof}

Let $\zeta \colon \Gal(K_{p^{\infty}}/K) \to \PGL_2(K_{p^{\infty}})$ be a cocycle that satisfies \cref{equ:coc}. We observe that the condition given by \cref{equ:coc} means that the image of $\zeta$ on the subgroup $\Gal(K_{p^{\infty}}/K_{p^n})$ lies in $\Aut_{K_{p^n}}(X_{\Gamma},\pi_{\Gamma})$ because $\pi_{\Gamma}$ is defined over $K_{p^n}.$ Choose a large enough positive integer $M$ such that $\zeta$ factors through $\Gal(K_{p^M}/K_{p^n}).$ Recall that if $p=2$, then we are assuming that $n$ is bigger than or equal to $3.$ Therefore, the Galois group $\Gal(K_{p^M}/K_{p^n})$ is cyclic and its cardinality has only $p$ as a prime factor. Let $g$ be a generator of $\Gal(K_{p^M}/K_{p^n}).$ Let $a:=\zeta(g).$ Then by our choice of $b$ we get that $a^b=1.$ Also observe that $\zeta$ is a homomorphism on $\Gal(K_{p^M}/K_{p^n})$ so, $\zeta(g^b)=1.$ It follows that $\zeta$ factors through $\Gal(K_{bp^n}/K).$  
\end{proof}

\section{Proof of Main theorem}

In this section, we give a proof of \cref{Mainthm}.

We fix a genus $0$ congruence subgroup $\Gamma$ of level $p^n$ containing $-I.$ Let $h$ be the normalized hauptmodul of $\Gamma$ and let $\pi_{\Gamma} \in K_{p^n}(t)$ be the rational function such that $\pi_{\Gamma}(h)=j.$ We also fix a field $\Q \subseteq K \subseteq K_{p^n}.$ Let $b_{\Gamma}$ be the integer $b$ as in \cref{vertical cocycles} corresponding to $\Gamma.$ 
We then compute all the cocycles from $\Gal(K_{b_{\Gamma}p^n}/K) \to \PGL_2(K_{b_{\Gamma}p^n})$ that satisfy \cref{equ:coc}. There are finitely many such cocycles. Let $\zeta$ be one such cocycle. We then check if $\zeta$ is a coboundary and compute a matrix $A \in \PGL_2(_{b_{\Gamma}p^n})$ using method explained in \cref{sec:Twists}. The pair $(\PP^1_{K},\pi_{\Gamma} \circ A^{-1})$ gives us the modular curve corresponding to the cocycle $\zeta.$

\section{Reading the tables}

In this section, we give a brief guide to go through the tables.
In \href{https://github.com/Rakvi6893/Genus-0-modular-curves-over-higher-number-fields/blob/main/Tables%20for%20odd%20prime%20power%20level_genus%200%20over%20extensions%20of%20Q.txt}{ text file 1} 
we list the modular curves $(X_G,\pi_G)$ where level of $G$ is a power of an odd prime number. The label is of the form $NA-MB$ where $A$ and $B$ are alphabets, $NA$ is the Cummins Pauli \cite{MR2016709} label of congruence subgroup corresponding to $G \intersect \SL_2(\Zhat)$ and $M$ is the level of $G.$ We list the function $\pi_{\Gamma} \in K_N(t)$ satisfying $\pi_{\Gamma}(h)=j$ where $h$ is the hauptmodul of congruence subgroup corresponding to $G \intersect \SL_2(\Zhat).$ We indicate the field $K$ over which $(X_G,\pi_G)$ is defined and give a matrix $A$ such that $\pi_G=\pi_{\Gamma}(A(t)).$

In \href{https://github.com/Rakvi6893/Genus-0-modular-curves-over-higher-number-fields/blob/main/Table_powers%20of%202.txt}{ text file 2}
we list the modular curves $X_G$ where level of $G$ is a power of $2$. We describe $X_G$ as a conic $Q$. To obtain $\pi_G$ explicitly we can do the following. We can compute a change of variable matrix $B$ between $Q$ and $C_0$. Recall that $C_0$ is the conic $y^2-xz=0.$ This feature is not currently implemented in \texttt{Magma}. We can then compute $C:=\bar{\phi}^{-1}(BA)$ and $\pi_G$ will be $\pi_{\Gamma}(C^{-1}(t)).$ 

\begin{bibdiv}

\begin{biblist}

\bib{MR1484478}{article}{
    author = {Bosma, Wieb and Cannon, John and Playoust, Catherine},
     title = {The {M}agma algebra system. {I}. {T}he user language},
      note = {Computational algebra and number theory (London, 1993)},
   journal = {J. Symbolic Comput.},
  
    volume = {24},
      year = {1997},
    number = {3-4},
     pages = {235--265},
      issn = {0747-7171},
   mrclass = {68Q40},
  mrnumber = {MR1484478},
     review={\MR{1484478}},
       doi = {10.1006/jsco.1996.0125},
       URL = {http://dx.doi.org/10.1006/jsco.1996.0125},
}

\bib{MR1994218}{book}{
   author={Bourbaki, Nicolas},
   title={Algebra II. Chapters 4--7},
   series={Elements of Mathematics (Berlin)},
   note={Translated from the 1981 French edition by P. M. Cohn and J. Howie;
   Reprint of the 1990 English edition [Springer, Berlin;  MR1080964
   (91h:00003)]},
   publisher={Springer-Verlag, Berlin},
   date={2003},
   pages={viii+461},
   isbn={3-540-00706-7},
   review={\MR{1994218}},
   doi={10.1007/978-3-642-61698-3},
}

\bibitem[CLY04]{Rademacher}
Chua, Kok Seng and Lang, Mong Lung and Yang, Yifan,
  \newblock \textit{On Rademacher's conjecture: congruence subgroups of genus zero of the modular group},
 \newblock Journal of Algebra,
 \newblock 277,
 \newblock 1,
 \newblock 408--428,
 \newblock 2004,
 \newblock Elsevier

\bib{MR2016709}{article}{
   author={Cummins, C. J.},
   author={Pauli, S.},
   title={Congruence subgroups of ${\rm PSL}(2,{\ZZ})$ of genus less than
   or equal to 24},
   journal={Experiment. Math.},
   volume={12},
   date={2003},
   number={2},
   pages={243--255},
   issn={1058-6458},
   review={\MR{2016709}},
}
available at \href{http://www.uncg.edu/mat/faculty/pauli/congruence/}{http://www.uncg.edu/mat/faculty/pauli/congruence/}

\bib{MR0337993}{article}{
   author={Deligne, P.},
   author={Rapoport, M.},
   title={Les sch\'{e}mas de modules de courbes elliptiques},
   language={French},
   conference={
      title={Modular functions of one variable, II},
      address={Proc. Internat. Summer School, Univ. Antwerp, Antwerp},
      date={1972},
   },
   book={
      publisher={Springer, Berlin},
   },
   date={1973},
   pages={143--316. Lecture Notes in Math., Vol. 349},
   review={\MR{0337993}},
}
		
\bib{MR648603}{book}{
   author={Kubert, Daniel S.},
   author={Lang, Serge},
   title={Modular units},
   series={Grundlehren der Mathematischen Wissenschaften [Fundamental
   Principles of Mathematical Science]},
   volume={244},
   publisher={Springer-Verlag, New York-Berlin},
   date={1981},
   pages={xiii+358},
   isbn={0-387-90517-0},
   review={\MR{648603}},
}

\bib{MR3906177}{article}{
   author={Lombardo, Davide},
   author={Lorenzo Garc\'{\i}a, Elisa},
   title={Computing twists of hyperelliptic curves},
   journal={J. Algebra},
   volume={519},
   date={2019},
   pages={474--490},
   issn={0021-8693},
   review={\MR{3906177}},
   doi={10.1016/j.jalgebra.2018.08.035},
}


\bib{Preu2013}{article}{ title={Effective lifting of 2-cocycles for Galois cohomology}, author={Thomas Preu}, journal={Central European Journal of Mathematics}, year={2013}, volume={11}, pages={2138-2149} }

\bibitem[Rakvi21]{2105.14623}
Rakvi,
\newblock \textit{A Classification of Genus 0 Modular Curves with Rational Points}, 2021;
\newblock arXiv:2105.14623.
		
\bib{MR0387283}{article}{
   author={Serre, Jean-Pierre},
   title={Propri\'{e}t\'{e}s galoisiennes des points d'ordre fini des courbes
   elliptiques},
   language={French},
   journal={Invent. Math.},
   volume={15},
   date={1972},
   number={4},
   pages={259--331},
   issn={0020-9910},
   review={\MR{0387283}},
   doi={10.1007/BF01405086},
}

\bib{MR1466966}{book}{
  author={Serre, Jean-Pierre},
  title={Galois Cohomology},
  note={Translated from the French by Patrick Ion and revised by the
  author},
  publisher={Springer-Verlag, Berlin},
  date={1997},
  pages={x+210},
  isbn={3-540-61990-9},
  review={\MR{1466966}},
  doi={10.1007/978-3-642-59141-9},
}

\bib{MR1291394}{book}{
   author={Shimura, Goro},
   title={Introduction to the arithmetic theory of automorphic functions},
   series={Publications of the Mathematical Society of Japan},
   volume={11},
   note={Reprint of the 1971 original;
   Kan\^{o} Memorial Lectures, 1},
   publisher={Princeton University Press, Princeton, NJ},
   date={1994},
   pages={xiv+271},
   isbn={0-691-08092-5},
   review={\MR{1291394}},
}

\bib{MR3671434}{article}{
   author={Sutherland, Andrew V.},
   author={Zywina, David},
   title={Modular curves of prime-power level with infinitely many rational
   points},
   journal={Algebra Number Theory},
   volume={11},
   date={2017},
   number={5},
   pages={1199--1229},
   issn={1937-0652},
   review={\MR{3671434}},
   doi={10.2140/ant.2017.11.1199},}


\end{biblist}
\end{bibdiv}

\end{document}